\newtheorem{theorem}{Theorem}[section]
\numberwithin{figure}{section}
\newtheorem{definition}{Definition}[section]
\begin{document}
\title{A generalized novel approach based on orthonormal polynomial wavelets with an application to Lane-Emden equation}
\author{Amit K. Verma$^a$\\\small{\textit{$^{a}$Department of Mathematics, IIT Patna, Patna $801106$, Bihar, India.}}\\ Diksha Tiwari$^b$\footnote{$^a$akverma@iitp.ac.in,$^b$dikshatiwari227@gmail.com}
	\\\small{\textit{$^b$Faculty of Mathematics, University of Vienna, Austria.}}\\  Carlo Cattani$^c$\thanks{cattani@unitus.it}\\\small{\it{$^c$Engineering School (DEIM), University of Tuscia,}}\\\small{\it{
Largo dell'Universit\ a, 01100 Viterbo, Italy.}}}
\date{\today}

\maketitle

\begin{abstract}
Capturing solution near the singular point of any nonlinear SBVPs is challenging because coefficients involved in the differential equation blow up near singularities. In this article, we aim to construct a general method based on orthogonal polynomials as wavelets.  We discuss multiresolution analysis for wavelets generated by orthogonal polynomials, e.g., Hermite, Legendre, Chebyshev, Laguerre, and Gegenbauer. Then we use these wavelets for solving nonlinear SBVPs. These wavelets can deal with singularities easily and efficiently. To deal with the nonlinearity, we use both Newton's quasilinearization and the Newton-Raphson method. To show the importance and accuracy of the proposed methods, we solve the Lane-Emden type of problems and compare the computed solutions with the known solutions. As the resolution is increased the computed solutions converge to exact solutions or known solutions. We observe that the proposed technique performs well on a class of Lane-Emden type BVPs. As the paper deals with singularity, non-linearity significantly and different wavelets are used to compare the results. 
\end{abstract}
\textit{Keywords:} \small{Quasilinearization; Newton-Raphson; Legendre; Hermite; Chebyshev; Laguerre; Gegenbauer;  Singular boundary value problems}\\
\textit{AMS Subject Classification:} \small{	65T60; 34B16}
\section{Introduction}\label{P3_sec1}
The solution of singular differential equations shows unusual behavior near the singular points, sometimes it is bounded, sometimes unbounded, sometimes it may oscillate or sometimes it is peculiar in some other manner. This behavior and its occurrence in different areas of science and engineering make SBVPs very interesting for researchers.
Consider the following class of nonlinear singular differential equations
\begin{equation}\label{P3_eq1}
ty''(t)+ky'(t)+t{f(t,y(t),t^ky'(t))}=0,\quad\quad\quad 0<t\leq 1,
\end{equation}
 subject to the following boundary conditions
\begin{eqnarray}
\label{P3_1c}&& y'(0)=\alpha,\quad ay(1)+by'(1)=\beta.
\end{eqnarray}
Several real life problems when modelled gives rise to nonlinear partial differential equation \[\nabla^2 u(P)=f(P,u(P))\] and if one is interested in planar ($k=0$), cylindrical ($k=1$) or spherical ($k=2$) symmetry then one arrives at \eqref{P3_eq1}. For various values of $k$ and different type of source functions $f(t,y(t),t^ky'(t))$,  the BVP defined by \eqref{P3_eq1}-\eqref{P3_1c}  occur in different areas of science and engineering. Some of them we mention as given below:\\

\noindent a) when $k=2$ and $f(t,y,t^ky')=y^n$, then it occurs in the study of stellar structure \cite{CS1967}, \\
b) when $k=0,1,2$ and $f(t,y,t^ky')=e^y$, then it occurs in thermal explosion \cite{CHAMBRE1952}, \\
c) when $k=2$, $f(t,y,t^ky')=e^{-y}$, then it occurs in thermal distribution in the human head \cite{RA1986}, \\
d) when $k=3$, $f(t,y,t^ky')=\frac{1}{8y^2}-\frac{\tilde{a}}{y}-\tilde{b}t^{2\tilde{c}-4},$ then it occurs in rotationally symmetric shallow membrane caps \cite{RW1989,JV1998}. \\

For existence-uniqueness of SBVPs \eqref{P3_eq1}-\eqref{P3_1c} the reader is suggested to refer \cite{pandey2008existence,Pandey2008existence1,pandey2009note} and the references cited in a recent review article \cite{akvsbvpreview2020}. Some very efficient numerical schemes based on finite difference can be found in \cite{Chawla1988,rkpaks2009,skcoam2020} and the references therein. 

Wavelet methods arise as one of trending methods to solve differential equations. Due to its properties like smoothness, well-localization, admissibility and orthonormality methods based on wavelets are most preferred by scientists and engineers. In \cite{Carlo2001} author proposed numerical approximation of differential operators using Haar wavelets and their spline-derivatives. A method based on Haar Wavelets are used for solving generalized Lane-Emden equations in \cite{RC2013}. Authors in \cite{Mittal2017} developed a novel
algorithm based on scale-3 Haar wavelets, applied it on Burgers' equation and did sensitivity analysis of shock waves. In \cite{SC2016} method based on Haar wavelets are used for solving non-linear singular initial value problems and in \cite{akvdt2019} Haar wavelets coupled with quasilinearization is used to solve class of Lane-Emden equation at higher resolution. In \cite{rsg2019,rsgjshgag2019} Haar wavelets are efficiently used to solve SBVPs arising in various real life problems. Haar wavelet along with quasilinearization is used to solve nonlinear BVPs \cite{VH2011, VF2001}.  A new higher order Haar wavelet method has been developed for solving differential and integro-differential equations in \cite{MAJAK2019AIP,MAJAK2018CS}.

The construction of wavelets based on orthogonal polynomials is more recent. Since it is easy to generate an orthonormal basis of $L^2(\mathbb{R})$ with help of orthogonal polynomials, so wavelets based on orthogonal polynomials are very popular nowadays. Different orthonormal wavelets are defined by using Legendre polynomials, Chebyshev polynomials, Hermite polynomials, Laguerre polynomials, Gegenbauer polynomials and researchers used these wavelets for solving various classes of differential equations and other related problems. We list some important results in which various orthonormal polynomials are used to compute the solutions of various classes of differential equations.
\begin{enumerate}[(i)]
\item Legendre wavelet \cite{KHELLAT2006} is used to solve ordinary differential equation \cite{MOH2011} and q-difference equations \cite{Vijesh2016},  nonlinear system of two-dimensional integral equations \cite{Maleknejad2019}. 
\item Chebyshev wavelet is used to solve singular BVPs \cite{RR2015} and Sine-Gordon equation \cite{Antony2017}.
\item Hermite wavelet is used to solve singular differential equations \cite{Usman2013}. 
\item Laguerre wavelet is used to solve linear and non-linear singular BVPs  \cite{Zhou2016}. 
\item Gegenbauer wavelet is used to solve fractional differential equation \cite{Mujeeb2015}. 
\end{enumerate}
In this article, we construct wavelet methods based on orthogonal polynomials as wavelets. They are referred as Chebyshev wavelet Newton approach (ChWNA), Gegenbauer wavelet Newton approach (GeWNA), Legendre wavelet Newton approach (LeWNA), Laguerre wavelet Newton Approach (LaWNA), Hermite wavelet Newton approach (HeWNA), Chebyshev wavelet quasilinearization approach (ChWQA), Gebenauer wavelet quasilinearization approach (GeWQA), Legendre wavelet quasilinearization approach (LeWQA), Laguerre wavelet quasilinearization approach (LaWQA) and Hermite wavelet quasiliniearization approach (HeWQA). Such an approach has not been explored by researchers in the existing literature. We apply our novel approach to a class of nonlinear singular BVPs which are also referred as Lane-Emden equations. This may further be explored on various real life problems governed by differential equations and fractional nonlinear differential equations (\cite{Sunil2014, Rashidi2014, Ranbir2020, Mugisha2020, Bessem2020, Manafian2020, Hossein2020, Fatima2020, Aziz2020}).

This paper is organized as follows. In section \ref{P3_sec2} we discuss properties of orthogonal wavelets defined by using Legendre, Hermite, Chebyshev, Laguerre and Gegenbauer polynomials. Section \ref{P3_sec6}  and \ref{P3_sec7} discuss computational aspects of orthonormal polynomial wavelets and methods based on these wavelets, respectively.  We follow two approaches, wavelet quasilinearization approach and wavelet Newton approach. Section \ref{P3_sec8} deals with convergence analysis of the methods based on wavelet Newton approach. Finally section \ref{P3_sec9} deals with numerical illustrations and section \ref{P3_conclusions} deals with conclusion.
\section{Preliminary} \label{P3_sec2}
Most of the orthonormal wavelets' bases are associated with an MRA but it is possible to construct an orthonormal basis for $L^2(\mathbb{R})$ which is not derivable from an MRA. Here, we give a formal definition of MRA (\cite{BN2009, ID1992, MCPLAW2012}).
\begin{comment}
\begin{definition}
An MRA with scaling function $\varphi$ is a collection of closed subspaces $V_j , j = \dots,-2,-€"1,0,1,2,\dots$ of $L^2(\mathbb{R})$ such that
\begin{description}
\item[MRA 1:] $\mathbf{V}_{j} \subset \mathbf{V}_{j+1}$.
\item[MRA 2:] $\overline{\bigcup\mathbf{V}_{j}}=\mathbf{L^2(\mathbb{R})}$.
\item[MRA 3:] $ \bigcap \mathbf{V}_{j}=\{0\}$.
\item[MRA 4:] The function $f(x)$ belongs to $\mathbf{V}_{j}$ if and only if the function $f(2x) \in \mathbf{V}_{j+1}$.
\item[MRA 5:] The function $\varphi$ belongs to $\mathbf{V}_{0}$, the set $\{\varphi(x-k), k \in \mathbf{\mathbb{Z}}\}$ is orthonormal basis for $\mathbf{V}_{0}$.
\end{description}
\end{definition}
\end{comment}
The sequence of wavelet subspaces $W_j$ of $L^2(\mathbb{R})$, are such that $V_j\perp W_j$, for all $j$ and $V_{j+1} = V_j\oplus W_j$. The closure of $\oplus_{j\in \mathbb{Z}} W_j$ is dense in $L^2(\mathbb{R})$ with respect to the $L^2$ norm.

The definition of MRA enables the sequence $\left\{2^{j/2}\varphi(2^{-j}t-k)\right\}_{k\in \mathbb{Z}}$ to form an orthonormal basis for $V_j$. Let $\psi(t)$ be the mother wavelet, then, $$\psi(t)=\sum_{k\in \mathbb{Z}}a(k)\varphi(2t-k),$$ and under certain conditions $\psi_{j,k}(t)=2^{-j/2}\psi(2^{-j}t-k)$ forms an orthonormal basis of $L^2(\mathbb{R})$. 

The basic tenet of MRA is that whenever a collection of closed subspaces satisfies assumptions of MRA, then there exists an orthonormal wavelet basis such that, for all $f\in L^2(\mathbb{R})$,
$$P_{j-1}f=P_jf+\sum_{k\in \mathbb{Z}}\langle~f,\psi_{j,k}\rangle\psi_{j,k},$$
where $P_j$ is the orthogonal projection onto $V_j$. To establish this, it is enough to show that $\psi\in W_0$ such that
the $\psi(\cdot-k)$ constitute an orthonormal basis for $W_0$. 

The most promising class of scaling functions ($\varphi$) are those, that have compact support and continuity makes it even better. If any scaling function has both properties then the associated decomposition and reconstruction algorithms are computationally much faster and best suited for analysing and reconstructing signals.

\subsection{Orthogonal Polynomial Wavelet}
Let $O_{m}(t)$ be an orthonormal polynomial \cite{ID1992} of degree $m$ which is defined on $[a,b]$. Let $O_m(t)$ satisfy the following orthonormality condition:
\begin{eqnarray}
\label{OrthoGen} \int_a^bw(t)O_m(t)O_n(t)dt=K \delta_{mn},
\end{eqnarray}
with respect to weight function $w(t)$, $K$ is some real number.  We define the orthonormal polynomial wavelet on the interval $[0,1)$ as follows
\begin{equation}\label{Pu}
\psi_{n,m}(t)=
v_n2^{k/2}O_{m}(2^kt-\hat{n})\chi_{[\frac{\hat{n}-1}{2^k},\frac{\hat{n}+1}{2^k})},
\end{equation}
where $k=1,2,\hdots$ is the level of resolution, $n=1,2,\hdots, 2^{k-1},\hspace{0.25cm} \hat{n}=2n-1$ is translation parameter and $m=0,1,2,\hdots,M-1$ is the degree of the polynomial. The coefficient $v_n$ is a real number and is kept here to take care the orthonormality. It is easy to check that $\psi_{n,m}(t)$ forms an orthonormal basis for $L^{2}(\mathbb{R})$. 

Now we study various orthonormal wavelets and discuss their applications in the upcoming sections.
\subsubsection{Chebyshev Wavelet}\label{p3_sub1}
Chebyshev polynomials \cite{BIAZAR2012608} are defined on the interval $[-1,1]$ with help of the recurrence formula:
\begin{eqnarray*}
T_{0}(t)=1,~T_{1}(t)=t, \quad~T_{m+1}(t)= 2t~T_{m}(t)-T_{m-1}(t), ~m=1,2,3,\cdots.
\end{eqnarray*}
These polynomials are orthonormal with respect to the weight function $\frac{1}{\sqrt{1-t^2}}$ on $[-1,1]$.

Chebyshev wavelets are defined on the interval $[0,1]$ as follows
\begin{equation} \label{P3_Def1}
\psi_{n,m}(t)=
2^{k/2}\bar{T}_{m}(2^kt-\hat{n})\chi_{[\frac{\hat{n}-1}{2^k},\frac{\hat{n}+1}{2^k})},
\end{equation}
where
\begin{equation}
\bar{T}_{m}(t)= \begin{cases}\frac{1}{\sqrt{\pi}},\quad m=0,\\
\frac{1}{\sqrt{\pi}}T_{m}(t), \quad m >0,
\end{cases}
\end{equation}
where $k=1,2,\hdots$ is the level of resolution, $n=1,2,\hdots, 2^{k-1},\hspace{0.25cm} \hat{n}=2n-1$ is the translation parameter, $m=0,1,2,\hdots,M-1$ is the degree of Chebyshev polynomial.
\subsubsection{Hermite Wavelet}\label{p3_sub2}
Hermite polynomials \cite{Saeed2014} are defined on the interval $(-\infty,\infty)$ with help of the recurrence formula:
\begin{eqnarray*}
H_{0}(t)=1,~H_{1}(t)=2t,\quad~ H_{m+1}(t)= 2t~H_{m}(t)-2mH_{m-1}(t),~m=1,2,3,\cdots.
\end{eqnarray*}
These polynomials are orthonormal with respect to the weight function $e^{-t^2}$.

Hermite wavelets are defined on the interval $[0,1]$ as follows:
\begin{equation} \label{P3_Def2}
\psi_{n,m}(t)=
2^{k/2}\frac{1}{\sqrt{n!2^{n}\sqrt{\pi}}}H_{m}(2^kt-\hat{n})\chi_{[\frac{\hat{n}-1}{2^k},\frac{\hat{n}+1}{2^k})},
\end{equation}
where $k=1,2,\hdots$ is the level of resolution, $n=1,2,\hdots, 2^{k-1},\hspace{0.25cm} \hat{n}=2n-1$ is the translation parameter, $m=0,1,2,\hdots,M-1$ is the degree of Hermite polynomial.

\subsubsection{Laguerre Wavelet}\label{p3_sub5}
Laguerre polynomials \cite{IQBAL201550}  are defined on the interval $(-\infty,\infty)$ and with help of the recurrence formula:
\begin{eqnarray*}
L_{0}(t)=1,~L_{1}(t)=1-t,\quad~(m+1)L_{m+1}(t)= (2m+1-t)L_{m}(t)-mL_{m-1}(t),~m=1,2,3,\cdots.
\end{eqnarray*}
These polynomials are orthonormal with respect to the weight function $e^{-t}$.

Laguerre wavelets are defined on the interval $[0,1]$ as follows
\begin{equation} \label{P3_Def3}
\psi_{n,m}(t)=
2^{k/2}\frac{1}{n!}L_{m}(2^kt-\hat{n})\chi_{[\frac{\hat{n}-1}{2^k},\frac{\hat{n}+1}{2^k})},
\end{equation}
where $k=1,2,\hdots$ is the level of resolution, $n=1,2,\hdots, 2^{k-1},\hspace{0.25cm} \hat{n}=2n-1$ is the translation parameter, $m=0,1,2,\hdots,M-1$ is the degree of Laguerre polynomial.

\subsubsection{Legendre Wavelet}\label{p3_sub3}
Legendre polynomials \cite{MOH2011} are defined on the interval $[-1,1]$ with help of the recurrence formula:
\begin{eqnarray*}
P_{0}(t)=1,~P_{1}(t)=t,\quad~(m+1)P_{m+1}(t)= (2m+1)tP_{m}(t)-mP_{m-1}(t),~m=1,2,3,\cdots.
\end{eqnarray*}
These polynomials are orthonormal.

Legendre wavelets are defined on the interval $[0,1]$ as follows
\begin{equation} \label{P3_Def4}
\psi_{n,m}(t)=
2^{k/2}\sqrt{\left(n+\frac{1}{2}\right)}P_{m}(2^kt-\hat{n})\chi_{[\frac{\hat{n}-1}{2^k},\frac{\hat{n}+1}{2^k})},
\end{equation}
where $k=1,2,\hdots$ is the level of resolution, $n=1,2,\hdots, 2^{k-1},\hspace{0.25cm} \hat{n}=2n-1$ is the translation parameter, $m=0,1,2,\hdots,M-1$ is the degree of Legendre polynomial.
\subsubsection{Gegenbauer Wavelet}\label{p3_sub4}
Gegenbauer polynomials \cite{Mujeeb2015} are defined on the interval $[-1,1]$ and can be defined with help of the recurrence formula:

\begin{eqnarray*}
C_{0}^{\alpha}(t)=1,~C_{1}^{\alpha}(t)=2\alpha t,\quad~C_{m}^{\alpha}(t)= \frac{1}{m}\left[2t(n+\alpha-1)C_{m-1}^{\alpha}(t)-(n+2\alpha-2)C_{m-1}^{\alpha}(t)\right],~m=1,2,3,\cdots.
\end{eqnarray*}
These polynomials are orthogonal with respect to the weight function $(1-t^2)^{\alpha-\frac{1}{2}}$.

For $\alpha > -\frac{1}{2}$, Gegenbauer wavelets are defined on the interval $[0,1]$ as follows
\begin{equation} \label{P3_Def5}
\psi_{n,m}(t)=
2^{k/2}\frac{1}{\sqrt{\alpha}}C_{m}^{\alpha}(2^kt-\hat{n})\chi_{[\frac{\hat{n}-1}{2^k},\frac{\hat{n}+1}{2^k})},
\end{equation}
where $k=1,2,\hdots$ is the level of resolution, $n=1,2,\hdots, 2^{k-1},\hspace{0.25cm} \hat{n}=2n-1$ is the translation parameter, $m=0,1,2,\hdots,M-1$ is the degree of Gegenbauer polynomial.

\section{Computation with Orthogonal Polynomial Wavelets}\label{P3_sec6}
\subsection{Approximation of a $L^2$ Function}
A function $f(t)$ defined on $L^2[0,1]$ can be approximated with any of the above orthogonal polynomial wavelet in the following manner
\begin{equation}\label{P3_1}
f(t)= \sum^{\infty}_{n=1}\sum^{\infty}_{m=0}c_{nm}\psi_{nm}(t).
\end{equation}
Now, by truncating \eqref{P3_1}, we define
\begin{equation}\label{P3_2}
f(t)\simeq \sum^{2^{k}-1}_{n=1}\sum^{M-1}_{m=0}c_{nm}\psi_{nm}(t)=c^{T}\psi(t),
\end{equation}
where $k=1,2,\hdots$ is level of resolution, $n=1,2,\hdots, 2^{k-1},\hspace{0.25cm} \hat{n}=2n-1$ is the translation parameter, $m=0,1,2,\hdots,M-1$ is the degree of orthogonal polynomial, and  $\psi(t)$ is $2^{k-1}M\times 1$ matrix given as:
\begin{equation*}\label{P3_3}
\psi(t)= [\psi_{1,0}(t),\dots,\psi_{1,M-1}(t),\psi_{2,0}(t),\dots,\psi_{2,M-1}(t),\dots,\psi_{2^{k-1},0}(t),\dots,\psi_{2^{k-1},M-1}(t)]^T.
\end{equation*}
Here $c$ is $2^{k-1}M\times 1$ matrix and $M$ is the degree of the orthogonal polynomial. Given $f$ the entries of $c$ can be computed as 
\begin{equation}\label{P3_4}
c_{ij}=\int^{1}_{0}w(t) \psi_{ij}(t)f(t)\ dt.
\end{equation}
\subsection{Integration of Orthogonal Polynomial Wavelets}\label{p3_sub6}
As suggested in \cite{Gupta2015}, $\nu$-th order integration of $\psi(t)$ can also be approximated as
\begin{multline*}\label{P3_5}
\int^{t}_{0}\int^{t}_{0}\dots\int^{t}_{0}\psi(\tau)d\tau\\
\simeq[P^\nu\psi_{1,0}(t),\dots,P^\nu\psi_{1,M-1}(t),P^\nu\psi_{2,0}(t),\dots,P^\nu\psi_{2,M-1}(t),\dots,P^\nu\psi_{2^{k-1},0}(t),\dots,P^\nu\psi_{2^{k-1},M-1}(t)]^T,
\end{multline*}
where
\begin{equation}
 P^{\nu}\psi_{n,m}(t)= v_n2^{k/2}P^{\nu}O_{m}(2^kt-\hat{n})\chi_{[\frac{\hat{n}-1}{2^k},\frac{\hat{n}+1}{2^k})},
\end{equation}
and $v_m$ is an appropriate orthonormality constant.

Note: Integral operator $P^{\nu}(\nu>0)$ of a function $f(t)$ is defined as
\begin{equation*}\label{P3_6}
P^{\nu}f(t)= \frac{1}{\nu!}\int_{0}^{t}(t-s)^{\nu-1}f(s)ds.
\end{equation*}
\subsection{ Wavelets Collocation Method}\label{P3_sub7}
For application of the above orthogonal polynomial wavelets to the ordinary differential equations, discretization of $[0,1]$ is required. Here we use collocation method for discretization of  the interval $[0,1]$. Hence, we may define the mesh points as follows:
\begin{eqnarray}
\label{P3_91}\bar t_{l} = l \Delta t,\quad\quad\quad l = 0,1,\cdots,M-1.
\end{eqnarray}
For the collocation points we use the following relationship
\begin{eqnarray}
\label{P3_92}t_{l} = 0.5(\bar t_{l-1} + \bar t_{l}), \quad\quad\quad l = 1,\cdots,M-1.
\end{eqnarray}
For computation purpose we take $k=1$ and hence \eqref{P3_2} takes the following form
\begin{equation}\label{P3_81}
f(t)\simeq \sum^{M-1}_{m=0}c_{1m}\psi_{1m}(t).
\end{equation}
Now we replace $t$ by $t_l$ and solve the resulting system of linear equations to get the wavelet coefficient $c_{1m}$, hence approximate expression of the function may be obtained by the above equation. 
\section{Method of Solution}\label{P3_sec7}
In this section, general solution method is presented. Later for illustration we use different orthogonal polynomial wavelets on Lane-Emden equations and compute the numerical solutions.
\subsection{Wavelets Quasilinearization Approach (WQA)}\label{P3_sub8}
In this method, we use quasilinearization to linearize the SBVPs then we use the method of collocation for discretization and orthogonal wavelet methods for the computation of numerical solutions. We consider differential equation \eqref{P3_eq1} with boundary conditions \eqref{P3_1c}. Quasilinearizing equation \eqref{P3_eq1}, we get the following form of equation
\begin{subequations}\label{P3_7}
\begin{eqnarray}
\label{P3_22a} Ly_{r+1} =- y''_{r+1}(t)-\frac{k}{t}y'_{r+1}(t)=f(t,y_{r}(t))+(y_{r+1}-y_{r})(f_{y}(t,y_{r}(t)),
\end{eqnarray}
subject to the following boundary conditions,
\begin{eqnarray}
\label{P3_22d}&&y'_{r+1}(0)=\alpha,\quad\quad a{y_{r+1}(1)}+by'_{r+1}(1)=\beta.
\end{eqnarray}
Now we use the orthogonal polynomial wavelets method  \cite{CH1997} and assume 
\begin{eqnarray}
\label{P3_22e}y''_{r+1}(t)=\sum^{M-1}_{m=0}c_{1m}\psi_{1m}(t).
\end{eqnarray}
Integrating twice we get the following two equations:
\begin{eqnarray}
&&\label{P3_22f}y'_{r+1}(t)=\sum^{M-1}_{m=0}c_{1m}P\psi_{1m}(t)+y'_{r+1}(0),\\
&&\label{P3_22g}y_{r+1}(t)=\sum^{M-1}_{m=0}c_{1m}P^2\psi_{1m}(t)+ty'_{r+1}(0)+y_{r+1}(0).
\end{eqnarray}
\end{subequations}
\subsubsection{Treatment of the Boundary Conditions}\label{P3_subsubsec1}
Now replacing $t$ by $1$ in equation \eqref{P3_22f} and \eqref{P3_22g}, we get
\begin{eqnarray}\label{P3_17}
&&y'_{r+1}(1)=\sum^{M-1}_{m=0}c_{1m}P\psi_{1m}(1)+y'_{r+1}(0),\\
\label{P3_18}&&y_{r+1}(1)=\sum^{M-1}_{m=0}c_{1m}P^2\psi_{1m}(1)+y'_{r+1}(0)+y_{r+1}(0).
\end{eqnarray}
Putting these values in $ay_{r+1}(1)+by'_{r+1}(1)=\beta$ and solving for $y_{r+1}(0)$, we have
\begin{eqnarray*}\label{P3_19}
y_{r+1}(0)=\frac{1}{a}\left(\beta - ay'_{r+1}(0)-a\sum^{M-1}_{m=0}c_{1m}P^2\psi_{1m}(1)-b\left(\sum^{M-1}_{m=0}c_{1m}P\psi_{1m}(1)+y'_{r+1}(0)\right)\right).
\end{eqnarray*}
Hence from equation \eqref{P3_22g} we get
\begin{eqnarray}\label{P3_20}
y_{r+1}(t)=\sum^{M-1}_{m=0}c_{1m}P^2\psi_{1m}(t)+ty'_{r+1}(0)+\frac{1}{a}\left(\beta - ay'_{r+1}(0)-a\sum^{M-1}_{m=0}c_{1m}P^2\psi_{1m}(1)-b\left(\sum^{M-1}_{m=0}c_{1m}P\psi_{1m}(1)+y'_{r+1}(0)\right)\right).
\end{eqnarray}
Now we put values of $y_{r+1}(0)$ and $y_{r+1}(1)$ in equation \eqref{P3_22f} and \eqref{P3_20} we get,
\begin{eqnarray}
&&\label{P3_21}y'_{r+1}(t)=\alpha +\sum^{M-1}_{m=0}c_{1m}P\psi_{1m}(t),\\
&&\label{P3_22}y_{r+1}(t)=\frac{\beta}{a}+\left(t-1-\frac{b}{a}\right)\alpha+\sum^{M-1}_{m=0}c_{1m}\left(P^2\psi_{1m}(t)-P^2\psi_{1m}(1)-\frac{b}{a}P\psi_{1m}(1)\right).
\end{eqnarray}
Finally, we put values of $y''_{r+1}$, $y'_{r+1}$ and $y_{r+1}$ in the linearized differential equation \eqref{P3_22a}. Now we discretize the final equation with the collocation method and then solve the resultant linear system of equations by using a suitable initial guess $y_{0}(t)$. If the function values are known at one of the boundary points then choosing it as an initial guess is better. Finally, we get the required values of $y(t)$ at different collocation points for a given spatial points. 
\subsection{Wavelet Newton Approach (WNA)}\label{P3_sub9}
In this approach, we use the method of collocation for discretization and then we use an orthogonal polynomial wavelet for further computation. Finally, the Newton-Raphson method is used to solve the resulting nonlinear system of equation.

We consider differential equation \eqref{P3_eq1} with boundary conditions of the type  \eqref{P3_1c}. 
\subsubsection{Treatment of the Boundary Conditions}\label{P3_subsubsec2}
By the analysis similar to sub section \ref{P3_subsubsec1}, we arrive at the following set of equations
\begin{eqnarray}
&&y'(t)=\alpha +\sum^{M-1}_{m=0}c_{1m}P\psi_{1m}(t),\\
&&\label{P3_38}
y(t)=\frac{\beta}{a}+\left(t-1-\frac{b}{a}\right)\alpha+\sum^{M-1}_{m=0}c_{1m}\left(P^2\psi_{1m}(t)-P^2\psi_{1m}(1)-\frac{b}{a}P\psi_{1m}(1)\right).
\end{eqnarray}
Now we put the values of $y(t)$, $y'(t)$ and $y''(t)$ in \eqref{P3_eq1} and discretize the resulting equation with collocation method and solve the resultant nonlinear system with Newton-Raphson method for $c_{1m},~m=0,1,\dots,M-1$. Then by substituting value of $c_{1m}, m=0,1,\dots,M-1$, in \eqref{P3_38}, we will get the value of $y(t)$ at different collocation points.
\section{Convergence of WNA method based on orthogonal polynomials}\label{P3_sec8}
We will consider the following lemma for proving convergence of  wavelet Newton approach based on orthogonal polynomial, for quasilinearization approach it follows accordingly (see \cite{akvdt2019}).
Let us consider $2$-$nd$ order ordinary differential equation in general form
\begin{equation*}\label{P3_39}
G(t,y,y',y'')=0.
\end{equation*}
Now we have in WNA method
\begin{equation}\label{P3_con1}
f(t)=y''(t)= \sum^{\infty}_{n=1}\sum^{\infty}_{m=0}c_{nm}\psi_{nm}(t).
\end{equation}
Integrating this relation two times we have 
\begin{equation}\label{P3_con2}
y(t)= \sum^{\infty}_{n=1}\sum^{\infty}_{m=0}c_{nm}P^2\psi_{nm}(t)+B_{T}(t),
\end{equation}
where $B_{T}(t)$ stands for boundary term.

\begin{theorem}
	Let us assume that, $f(t)=\frac{d^2y}{dt^2}\in L^2[0,1]$ is a continuous function defined on $[0,1]$. Let us consider that $f(t)$ is bounded, i.e.,
	\begin{equation}\label{P3_con3}
	\forall  t \in [0,1] \quad \quad \exists \quad \eta :\left|\frac{d^2y}{dt^2}\right| \leq \eta.
	\end{equation}
	Then method based on Wavelet Newton Approach (WNA) converges.
\end{theorem}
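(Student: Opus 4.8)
The plan is to prove convergence in two stages: first establish that the truncated wavelet representation of $f(t)=u''(t)$ converges to $f$ in the $L^2$ norm, and then transfer this to $u$ itself through the double-integration operator $J^2$ together with the exactly reproduced boundary term $B_{T}(t)$ of \eqref{P3_con2}. Since Section \ref{P3_sec3} asserts that $\{\psi_{nm}\}$ is an orthonormal basis of $L^2$, and $f\in L^2[0,1]$, Parseval's identity gives
\begin{equation*}
\|f\|_{L^2}^2=\sum_{n=1}^{\infty}\sum_{m=0}^{\infty}|c_{nm}|^2<\infty,\qquad c_{nm}=\int_0^1 f(t)\psi_{nm}(t)\,dt .
\end{equation*}
The error of the truncation \eqref{P3_2} is then exactly the tail $\sum_{(n,m)\notin\mathcal{T}}|c_{nm}|^2$ of this convergent series, where $\mathcal{T}$ denotes the retained index set, so it tends to $0$ as the resolution $k$ and the polynomial order $M$ grow; this already yields qualitative convergence.

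To obtain an explicit rate I would use the boundedness hypothesis \eqref{P3_con3}. Substituting the wavelet definition (for HeWNA, \eqref{P3_Def2}) into $c_{nm}$ and changing variables by $\tau=2^{k}t-\hat n$ on the support $[\tfrac{\hat n-1}{2^{k}},\tfrac{\hat n+1}{2^{k}})$ gives
\begin{equation*}
c_{nm}=v(k,n,m)\,2^{-k/2}\int_{-1}^{1} f\!\left(\frac{\tau+\hat n}{2^{k}}\right)O_m(\tau)\,d\tau .
\end{equation*}
Because the orthogonal polynomials $O_m$ for $m\ge 1$ have a vanishing zeroth moment against the lowest-order element, I would subtract the value of $f$ at the centre of the cell and bound the remaining integrand by $\eta$ times the oscillation of $f$ across a cell of width $2^{1-k}$; this produces an estimate of the form $|c_{nm}|\le K\,\eta\,2^{-\mu k}$ with $\mu>\tfrac12$, once the normalisation $v(k,n,m)$ and the size of $O_m$ on $[-1,1]$ are controlled. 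Since there are $2^{k-1}$ translates at level $k$, squaring and summing gives a bound dominated by the geometric series $\sum_k 2^{k-1}K^2\eta^2 2^{-2\mu k}$, which converges for $\mu>\tfrac12$ and furnishes the rate at which the tail vanishes.

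Finally I would pass from $f$ to the solution. As $u=J^2 f+B_{T}$ and $J^2$ is a bounded linear operator on $L^2[0,1]$, the inequality $\|u-u_N\|\le\|J^2\|\,\|f-f_N\|\to 0$ follows immediately from the previous step, the boundary term being reproduced exactly by the construction in Section \ref{P3_sub9}. Convergence of the actual HeWNA iterates then reduces to the standard local convergence of Newton--Raphson applied to the finite nonlinear collocation system, which holds under the continuity and boundedness assumed on $f$.

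The main obstacle is the explicit coefficient estimate. With only $|u''|\le\eta$, and no bound on a derivative of $u''$, one cannot integrate by parts freely, so the decay must be extracted from the vanishing-moment property of $O_m$ combined with the modulus of continuity of $f$; making the exponent $\mu$ genuinely exceed $\tfrac12$ uniformly in $m$, while keeping the normalisation $v(k,n,m)$ and the growth of the Hermite polynomials under control, is the delicate point. I would also flag that the statement conflates truncation convergence of the wavelet series with convergence of the Newton iteration, and would make the claim precise by proving the $L^2$ convergence of the representation and separately invoking Newton--Raphson convergence for the resulting finite system.
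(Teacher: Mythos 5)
Your proposal reaches what the paper actually proves---vanishing of the wavelet truncation error---by a genuinely different and cleaner route. The paper never invokes Parseval: it bounds $|J^{2}\psi_{nm}(t)|\leq 2^{-k/2+1}v(k,n,m)K_{m}$ and $|c_{nm}|\leq 2^{-k/2+1}v(k,n,m)\eta K_{m}$ pointwise, using only $|O_{m}|\leq K_{m}$ on $[-1,1]$ and the bound \eqref{P3_con3}, inserts both into a quadruple sum dominating $\|E_{k,M}\|_{2}^{2}$, and concludes from convergence of the four resulting series, which for Hermite wavelets rests on the factorial decay of the normalization $v(k,n,m)$. Your first step (orthonormal basis plus Parseval, so the tail of $\sum|c_{nm}|^{2}$ vanishes) gives the same qualitative conclusion at once and sidesteps the paper's shakiest manipulations: the term-by-term absolute-value bound on the squared tail, and the uniform constant $K=\max\{K_{m},K_{r}\}$ taken over all polynomial orders, which for Hermite polynomials is not finite unless the normalization is made to absorb the growth in $m$ (the paper's sudden appearance of the factors $1/(m+1)^{2}$ in \eqref{P3_con22} is unjustified as written). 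Your transfer to $u$ via boundedness of the Volterra operator $J^{2}$ on $L^{2}[0,1]$, with $B_{T}$ reproduced exactly, replaces the paper's direct estimate of $|J^{2}\psi_{nm}|$ and is correct. One technical caution: orthonormality of these families holds in the weighted inner product, so Parseval must be applied with $c_{nm}=\int_{0}^{1}f\,\psi_{nm}\,w\,dt$; the paper itself is inconsistent on this point (compare \eqref{P3_4} with \eqref{P3_con14}).

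Two caveats on the parts that go beyond the paper. Your middle paragraph overreaches: with only continuity and $|f|\leq\eta$, the vanishing-moment argument yields $|c_{nm}|\leq C\,2^{-k/2}\,\omega(f;2^{1-k})$ with $\omega$ the modulus of continuity, and no uniform exponent $\mu>\tfrac12$ can be extracted---so drop the claimed rate, which you rightly flag as delicate; the qualitative claim already follows from Parseval. Finally, your closing observation is accurate and is a genuine improvement in precision: the theorem's conclusion (``HeWNA converges'') is established in the paper only in the sense of series-truncation error, and nothing is said about the Newton iterates; your separate appeal to Newton--Raphson convergence for the finite collocation system is the missing ingredient, though it requires differentiability of $f$ in $y$ and an invertible Jacobian near the root, not merely the continuity and boundedness hypotheses of the statement.
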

\begin{proof}
	In \eqref{P3_con2}, truncating the expansion we have,
	\begin{eqnarray}\label{P3_con4}
	y^{k,M}(t)= \sum^{2^k-1}_{n=1}\sum^{M-1}_{m=0}c_{nm}P^2\psi_{nm}(t)+B_{T}(t).
	\end{eqnarray}

	So error $E_{k,M}$  can be expressed as
	\begin{eqnarray}\label{P3_con5}
	||E_{k,M}||_{2}=||y(t)-y^{k,M}(t)||_{2}=\left|\left|\sum^{\infty}_{n=2^k}\sum^{\infty}_{m=M}c_{nm}P^2\psi_{nm}(t)\right|\right|_{2}.
	\end{eqnarray}
	Expanding the $L^{2}$ norm, we have
	\begin{eqnarray}\label{P3_con6}
	&&\nonumber	||E_{k,M}||_2^2=\int_{0}^{1}\left(\sum^{\infty}_{n=2^k}\sum^{\infty}_{m=M}c_{nm}P^2\psi_{nm}(t)\right)^2dt,\\
	\label{P3_con8}
	&&\nonumber	||E_{k,M}||_2^2=\sum^{\infty}_{n=2^k}\sum^{\infty}_{m=M}\sum^{\infty}_{s=2^k}\sum^{\infty}_{r=M}\int_{0}^{1}c_{nm}c_{sr} P^2\psi_{nm}(t)P^2\psi_{sr}(t)dt,\\
	\label{P3_con9}
	&&||E_{k,M}||_2^2\leq \sum^{\infty}_{n=2^k}\sum^{\infty}_{m=M}\sum^{\infty}_{s=2^k}\sum^{\infty}_{r=M}\int_{0}^{1}|c_{nm}||c_{sr}| |P^2\psi_{nm}(t)||P^2\psi_{sr}(t)|dt.
	\end{eqnarray}
	Now, as $t  \in [0,1]$
	\begin{eqnarray*}\label{P3_con7}
		|P^{2}\psi_{nm}(t)|&\leq& \int_{0}^{t}\int_{0}^{t}|\psi_{nm}(t)|dtdt,\\
	\label{P3_con10}	
&\leq& \int_{0}^{t}\int_{0}^{1}|\psi_{nm}(t)|dtdt. 
	\end{eqnarray*}
	Now by \eqref{Pu}, we have
	\begin{eqnarray*}\label{P3_con11}
		|P^{2}\psi_{nm}(t)|\leq  2^{k/2}v(n)\int_{0}^{t}\int_{\frac{\hat{n}-1}{2^k}}^{\frac{\hat{n}+1}{2^k}}|O_{m}(2^{k}t-\hat{n})|dtdt.
	\end{eqnarray*}
	By changing variable $2^kt-\hat{n}=y$ ,we get
	\begin{eqnarray*}
		|P^{2}\psi_{nm}(t)|\leq 2^{-k/2}v(n)\int_{0}^{t}\int_{-1}^{1}|O_{m}(y)|dydt.
	\end{eqnarray*}
	Since $|O_{m}(y)| \leq K_{m}$, $\int_{-1}^{1}|O_{m}(y)| \leq 2 K_{m}$, hence
	\begin{eqnarray*}\label{P3_con12}
		|P^{2}\psi_{nm}(t)|\leq 2^{-k/2}v(n)\int_{0}^{t}2 K_{m} dt.
	\end{eqnarray*}
	Since $t \in [0,1]$, we arrive at the bound
	\begin{eqnarray}\label{P3_con13}
	|P^{2}\psi_{nm}(t)|\leq 2^{-k/2+1}v(n)K_{m}.
	\end{eqnarray}
	Since
	\begin{eqnarray}\label{P3_con14}
	c_{nm}=\int_{0}^{1}f(t)\psi_{nm}(t)w_m(t)dt,
	\end{eqnarray}
	we have
	\begin{eqnarray}\label{P3_con15}
		|c_{nm}| \leq \int_{0}^{1}|f(t)||\psi_{nm}(t)||w_m(t)|dt.
	\end{eqnarray}
	Now using \eqref{P3_con3}, we get
	\begin{eqnarray*}\label{P3_con20}.
		|c_{nm}| \leq \eta\int_{0}^{1}|\psi_{nm}(t)||w_m(t)|dt,
	\end{eqnarray*}
	and by \eqref{Pu}, we have
	\begin{eqnarray*}\label{P3_con16}
		|c_{nm}|\leq  2^{k/2}\eta v(n)\int_{\frac{\hat{n}-1}{2^k}}^{\frac{\hat{n}+1}{2^k}}|O_{m}(2^{k}t-\hat{n})||w_m(2^{k}t-\hat{n})|dt.
	\end{eqnarray*}
	Now by change of variable $2^kt-\hat{n}=y$, we get
	\begin{eqnarray*}\label{P3_con17}
		|c_{nm}|\leq  2^{-k/2}\eta v(n)\int_{-1}^{1}|O_{m}(y)||w_m(y)|dy.
	\end{eqnarray*}
Putting the particular values of $O_m, w_m$ and $v_n$ for different orthogonal polynomial wavelets, we can always solve these inequalities for convergence. For Hermite wavelet we proceed other cases shall follow similarly
\begin{eqnarray*}\label{P2_con17}
	&&|c_{nm}|\leq  2^{-k/2}\frac{\eta}{\sqrt{n!2^{n}\sqrt{\pi}}}\int_{-1}^{1}|H_{m}y|dy,\\
	&&\label{P2_con18} |c_{nm}|\leq  2^{-k/2}\frac{\eta}{\sqrt{n!2^{n}\sqrt{\pi}}}\int_{-1}^{1}\left|\frac{H'_{m+1}(y)}{m+1}\right|dy.
\end{eqnarray*}
By putting $\int_{-1}^{1}|H'_{m+1}(y)|dy = h$, we have
\begin{eqnarray}\label{P2_con19}
|c_{nm}|\leq 2^{-k/2}\frac{1}{(\sqrt{n!2^{n}\sqrt{\pi}})(m+1)}\eta h.
\end{eqnarray}
Putting these values in \eqref{P3_con9}, we have, 
\begin{eqnarray}\label{P2_con21}
&&\|E_{k,M}\|_2^2\leq 2^{-2k}\eta^2h^4\sum^{\infty}_{n=2^k}\sum^{\infty}_{m=M}\sum^{\infty}_{s=2^k}\sum^{\infty}_{r=M}\int_{0}^{1}\frac{1}{(\sqrt{n!2^{n}\sqrt{\pi}})^2(m+1)^2}\frac{1}{(\sqrt{s!2^{s}\sqrt{\pi}})^2(r+1)^2} dt,\\
&&\label{P2_con22}
\|E_{k,M}\|_2^2\leq 2^{-2k}\eta^2h^4\sum^{\infty}_{n=2^k}\frac{1}{n!2^{n}\sqrt{\pi}}\sum^{\infty}_{s=2^k}\frac{1}{s!2^{s}\sqrt{\pi}} \sum^{\infty}_{m=M}\frac{1}{(m+1)^2}\sum^{\infty}_{r=M}\frac{1}{(r+1)^2}.
\end{eqnarray}
Since all four series converge, we have $\|E_{k,M}\| \longrightarrow 0$ as $k,M \rightarrow \infty$.
\end{proof}
\section{Numerical illustrations}\label{P3_sec9}
In this section we apply ChWNA, GeWNA, LeWNA, LaWNA, HeWNA, ChWQA, GeWQA, LeWQA, LaWQA and HeWQA methods to  solve the test examples from real life and compare our solutions with exact solutions whenever available. 

Since we use the Newton Raphson method or Newton's quazilinearization coupled with different wavelet methods, the order of convergence of the proposed method may be at most 2. By using the following error estimates, we shall be analysing the order of convergence for different wavelet methods.

We define $L_\infty$ error, $L_2$ error and consider rate of convergence from \cite{MAJAK2015321} as follows 
\begin{eqnarray}
\label{LInfinity}L_\infty~\mbox{error}&=&\max_{j}|y(t_j)-y_{w,N}(t_j)|,\\
\label{L2}L_2~\mbox{error}&=&\left(\sum_{j=0}^{M-1}|y(t_j)-y_{w,N}(t_j)|^2\right)^{1/2},\\
\mbox{Rate~ of~ Convergence~ (ROC)}&=& \frac{1}{\log{2}}\log\left|\frac{y(t)-y_{w,N}(t)}{y(t)-y_{w,2N}(t)}\right|.
\end{eqnarray}
where $y(t_j)$ is the exact solution and $y_{w,N}(t_j)$ is the wavelets solution at the point $t_j$ with $N$ grid points. Now we consider the some test examples and verify the applicability and accuracy of the proposed wavelets methods.
\subsection{Example 1}\label{P3_sub10}
Consider the non-linear SBVP:
\begin{eqnarray}\label{P3_40}
y''(t)+\frac{2}{t}y'(t)+y^{5}(t)=0,\quad\quad y'(0)=0,\quad y(1)=\sqrt{\frac{3}{4}}.
\end{eqnarray}
Chandrasekhar (\cite{CS1967}, p88) has derived above two point nonlinear SBVP. This equation arise in the study of stellar structure. Its exact solution is $y(t)=\sqrt{\frac{3}{3+t^2}}$. Solutions, errors and ROC are tabulated in tables \ref{P3_ex1tab1}, \ref{P3_ex1tab2}, \ref{P3_ex1tab3} and \ref{P3_ex1tab4}. Solution are also plotted in \ref{ex1fig}  We also observed for small changes in initial vector, (e.g., taking $[0.8,0.8,\hdots,0.8]$ or $[0.7,0.7,\hdots,0.7]$) does not significantly change the solution. Since all these methods converge to the same solution for computation of error and ROC, we have considered solution by only one of the methods. 
\begin{table}[H]
\caption{\small{Comparison of the solutions computed by the proposed methods for example \ref{P3_sub10} at $J=2$ :}}\label{P3_ex1tab1}											 
\centering											
\begin{center}											
\resizebox{16.8cm}{2.01cm}{											
\begin{tabular}	{|c | l|  l|  l| l| l| l| l| l| l| l| l| l|}\hline										
Grid Points	&	ChWNA	&	GeWNA	&	HeWNA	&	LaWNA	&	LeWNA	&	ChWQA	&	GeWQA	&	HeWQA	&	LaWNA	&	LeWQA &Exact	\\\hline
0	&	0.999999992	&	0.999999992	&	0.999999992	&	0.999999992	&	0.999999992	&	0.999999992	&	0.9999999923	&	0.999999992	&	0.999999992	&	0.999999992	& 1\\
0.1	&	0.998337474	&	0.998337474	&	0.998337474	&	0.998337474	&	0.998337474	&	0.998337474	&	0.998337474	&	0.998337474	&	0.998337474	&	0.998337474 &	0.998337488	\\
0.2	&	0.993399259	&	0.993399259	&	0.993399259	&	0.993399259	&	0.993399259	&	0.993399259	&	0.993399259	&	0.993399259	&	0.993399259	&	0.993399259 & 0.993399268	\\
0.3	&	0.985329271	&	0.985329271	&	0.985329271	&	0.985329271	&	0.985329271	&	0.985329271	&	0.985329271	&	0.985329271	&	0.985329271	&	0.985329271 &	0.985329278	\\
0.4	&	0.974354698	&	0.974354698	&	0.974354698	&	0.974354698	&	0.974354698	&	0.974354698	&	0.974354698	&	0.974354698	&	0.974354698	&	0.974354698 & 0.974354704	\\
0.5	&	0.960768918	&	0.960768918	&	0.960768918	&	0.960768918	&	0.960768918	&	0.960768918	&	0.960768918	&	0.960768918	&	0.960768918	&	0.960768918  &	0.960768923	\\
0.6	&	0.944911178	&	0.944911178	&	0.944911178	&	0.944911178	&	0.944911178	&	0.944911178	&	0.944911178	&	0.944911178	&	0.944911178	&	0.944911178 &	0.944911183	\\
0.7	&	0.927145538	&	0.927145538	&	0.927145538	&	0.927145538	&	0.927145538	&	0.927145538	&	0.927145538	&	0.927145538	&	0.927145538	&	0.927145538  & 0.927145541	\\
0.8	&	0.907841296	&	0.907841296	&	0.907841296	&	0.907841296	&	0.907841296	&	0.907841296	&	0.907841296	&	0.907841296	&	0.907841296	&	0.907841296 &	0.907841299	\\
0.9	&	0.887356507	&	0.887356507	&	0.887356507	&	0.887356507	&	0.887356507	&	0.887356507	&	0.887356507	&	0.887356507	&	0.887356507	&	0.887356507	 & 0.887356509 \\
1	&	0.866025404	&	0.866025404	&	0.866025404	&	0.866025404	&	0.866025404	&	0.866025404	&	0.866025404	&	0.866025404	&	0.866025404	&	0.866025404 &	0.866025404 \\\hline
\end{tabular}}											
\end{center}	
\end{table}

\begin{table}[H]
	\caption{\small{Errors for example \ref{P3_sub10} at $J=2$ :}}\label{P3_ex1tab2}											 										
	\centering											
	\begin{center}											
		\resizebox{4cm}{0.6cm}{											
			\begin{tabular}	{|c |c |c |}										
				\hline											
				
				Error	&	  WNA	&	WQA	\\\hline
				$L_\infty$	& 1.43E-08 &  1.43E-08\\
				$L_2$	& 2.07069E-08 & 2.07069E-08\\\hline
		\end{tabular}}											
	\end{center}
\end{table}

\begin{table}[H]
	\caption{\small{Absolute errors and ROC for wavelets Newton approach at $t=0.5$ for example \ref{P3_sub10}:}}\label{P3_ex1tab3}
\centering											
\begin{center}											
	\resizebox{8cm}{1cm}{											
		\begin{tabular}{|cccc|}									
			\hline
			Grid Points	&	Solution	&	Absolute Error	&	ROC\\\hline
			2	&	0.959325955	&	0.001442967	&		\\
			4	&	0.960766904	&	2.02E-06	&	9.481227721	\\
			8	&	0.960768918	&	4.70E-09	&	8.746543106	\\
			16	&	0.960768923	&	1.60E-14	&	18.16555862\\\hline
		\end{tabular}}											
	\end{center}
\end{table}

\begin{table}[H]
	\caption{\small{Absolute errors and ROC for wavelets quasilinearization approach at $t=0.5$ for example \ref{P3_sub10}:}}\label{P3_ex1tab4}	\centering											
\begin{center}											
	\resizebox{8cm}{1cm}{											
		\begin{tabular}{|cccc|}									
			\hline	
				Grid Points	&	Solution	&	Absolute Error	&	ROC\\\hline
			2	&	0.959325955	&	0.001442967	&		\\
			4	&	0.960766904	&	2.01894E-06	&	9.481227721	\\
			8	&	0.960768918	&	4.70057E-09	&	8.74654314	\\
			16	&	0.960768923	&	1.59872E-14	&	18.16555862 \\\hline
		    \end{tabular}}											
	\end{center}
\end{table}

\begin{figure}[H]
\begin{center}
\includegraphics[scale=0.30]{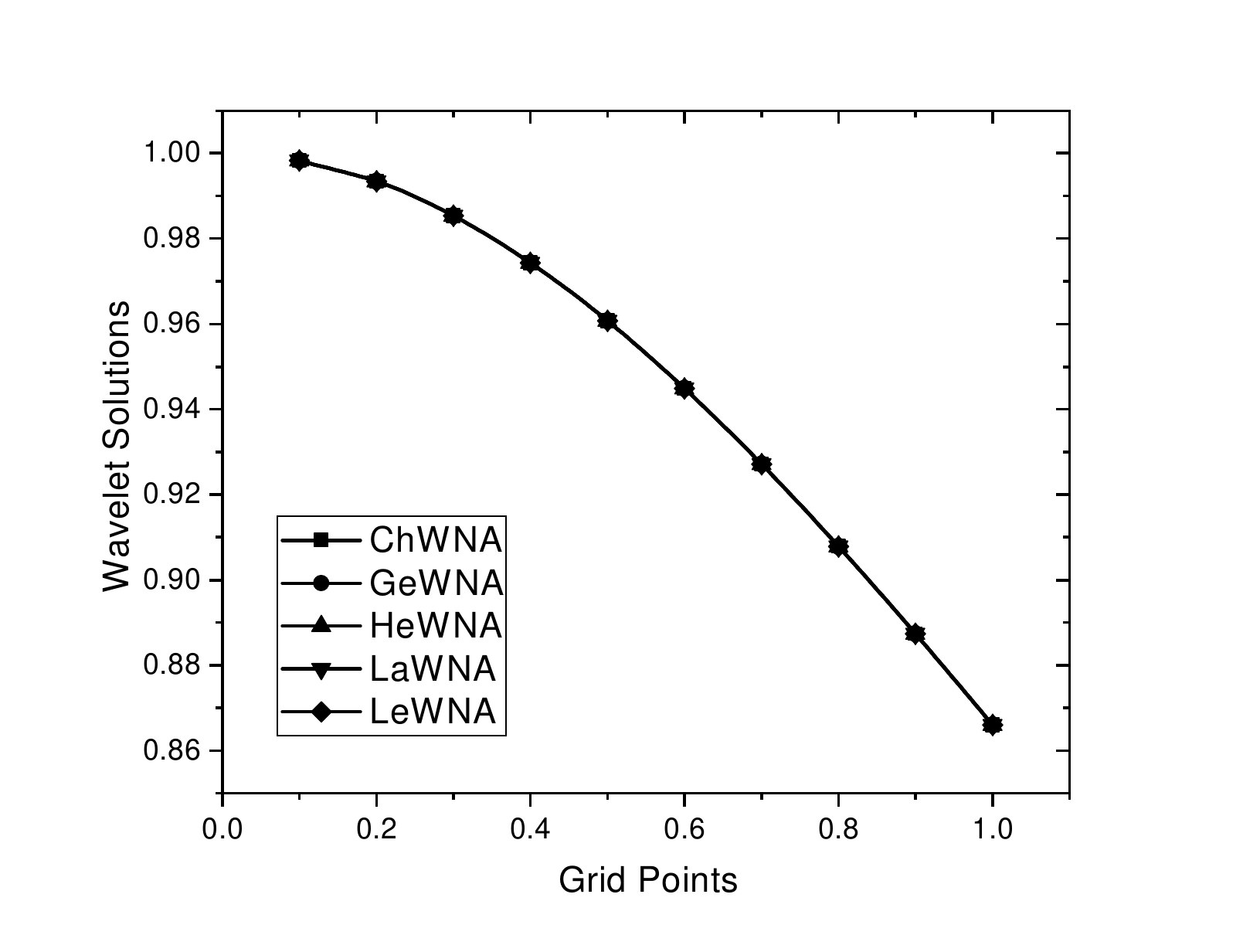}\includegraphics[scale=0.30]{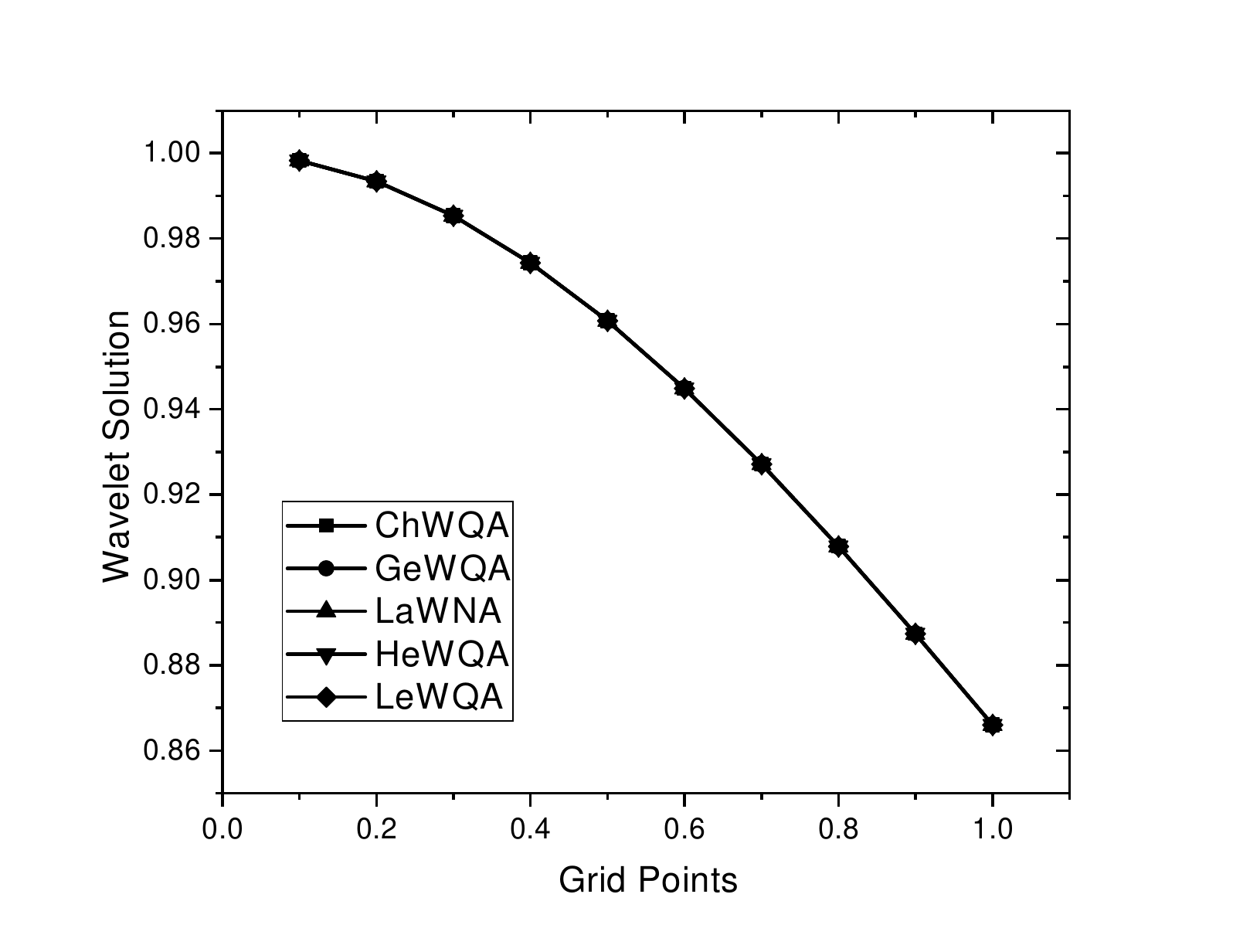}
\end{center}
\caption{Solution plots for $J=3$ for example \ref{P3_sub10} for Newton and Quasilinearization approach}\label{ex1fig}
\end{figure}

\subsection{Example 2}\label{p3_sub11}
Consider the nonlinear SBVP:
\begin{eqnarray}\label{P3_41}
y''(t)+\frac{1}{t}y'(t)+e^{y(t)}=0,\quad\quad y'(0)=0,\quad y(1)=0.
\end{eqnarray}
It is derived by Chamber \cite{CHAMBRE1952} to study the thermal explosion in a cylindrical vessel. The exact solution of \eqref{P3_41} is $$y(t) = 2 \ln{\left(\frac{4-2\sqrt{2}}{(3-2\sqrt{2})t^2+1}\right)}.$$ Solutions, errors and ROC are tabulated in tables \ref{P3_ex2tab1}, \ref{P3_ex2tab2}, \ref{P3_ex2tab3} and \ref{P3_ex2tab4}. Solution are also plotted in \ref{ex2fig}. We also observed that for small changes in the initial vector, (e.g., $[0.1,0.1,\hdots,0.1]$ or $[0.2,0.2,\hdots,0.2]$) does not significantly change the solution. Since all these methods converge to the same solution for computation of error and ROC, we have considered solution by only one of the methods.  
\begin{table}[H]
\caption{\small{Comparison of the solutions computed by the proposed methods for example \ref{p3_sub11} at $J=2$:}}\label{P3_ex2tab1}			\centering											 
\begin{center}											
\resizebox{16.8cm}{2.01cm}{											
\begin{tabular}	{|c | l|  l|  l| l| l| l| l| l| l| l| l| l|}										
\hline											
Grid Point	&	ChWNA	&	GeWNA	&	HeWNA	&	LaWNA	&	LeWNA	&	ChWQA	&	GeWQA	&	HeWQA	&	LaWQA	&	LeWQA & Exact	\\\hline
0	&	0.316694368	&	0.316694368	&	0.316694368	&	0.316694368	&	0.316694368	&	0.316694368	&	0.316694368	&	0.316694368	&	0.316694368	&	0.316694368	& 0.316694368 \\
0.1	&	0.31326585	&	0.31326585	&	0.31326585	&	0.31326585	&	0.31326585	&	0.31326585	&	0.31326585	&	0.31326585	&	0.31326585	&	0.31326585 & 0.31326585	\\
0.2	&	0.303015423	&	0.303015423	&	0.303015423	&	0.303015423	&	0.303015423	&	0.303015423	&	0.303015423	&	0.303015423	&	0.303015423	&	0.303015423 & 0.303015423 \\
0.3	&	0.286047265	&	0.286047265	&	0.286047265	&	0.286047265	&	0.286047265	&	0.286047265	&	0.286047265	&	0.286047265	&	0.286047265	&	0.286047265 & 0.286047265 \\
0.4	&	0.262531127	&	0.262531127	&	0.262531127	&	0.262531127	&	0.262531127	&	0.262531127	&	0.262531127	&	0.262531127	&	0.262531127	&	0.262531127 & 0.262531127 \\
0.5	&	0.232696784	&	0.232696784	&	0.232696784	&	0.232696784	&	0.232696784	&	0.232696784	&	0.232696784	&	0.232696784	&	0.232696784	&	0.232696784 & 0.232696784 \\
0.6	&	0.196826806	&	0.196826806	&	0.196826806	&	0.196826806	&	0.196826806	&	0.196826806	&	0.196826806	&	0.196826806	&	0.196826806	&	0.196826806 & 0.196826806 \\
0.7	&	0.155248107	&	0.155248107	&	0.155248107	&	0.155248107	&	0.155248107	&	0.155248107	&	0.155248107	&	0.155248107	&	0.155248107	&	0.155248107 & 0.155248107 \\
0.8	&	0.108322763	&	0.108322763	&	0.108322763	&	0.108322763	&	0.108322763	&	0.108322763	&	0.108322763	&	0.108322763	&	0.108322763	&	0.108322763	& 0.108322763 \\
0.9	&	0.056438602	&	0.056438602	&	0.056438602	&	0.056438602	&	0.056438602	&	0.056438602	&	0.056438602	&	0.056438602	&	0.056438602	&	0.056438602 & 0.056438602 \\
1	&	0	&	0	&	0	&	0	&	0	&	0	&	0	&	0	&	0	&	0 & 0	\\\hline
\end{tabular}}											
\end{center}											
\label{Table2}											
\end{table}

\begin{table}[H]
	\caption{\small{Errors for example \ref{p3_sub11} at $J=2$ :}}\label{P3_ex2tab2}											 \centering											
	\centering											
	\begin{center}											
		\resizebox{4cm}{0.6cm}{											
			\begin{tabular}	{|c |c| c |  }										
				\hline											
				
				Error	&	  WNA	&	WQA	\\\hline
				$L_\infty$	& 3.00502E-10
				 & 3.00502E-10
				 \\
				$L_2$	&5.46469E-10 & 5.46469E-10\\\hline
		\end{tabular}}											
	\end{center}
\end{table}

\begin{table}[H]
	\caption{\small{Absolute errors and ROC for wavelets Newton approach at $t=0.5$ for example \ref{p3_sub11}:}}\label{P3_ex2tab3}												 									
	\centering											
	\begin{center}											
		\resizebox{8cm}{1cm}{											
		\begin{tabular}{|cccc|}
				\hline											
				
				Grid Points	&	Solution	&	Absolute Error	&	ROC\\\hline
			2	&	0.231385398	&	0.001311386	&		\\
			4	&	0.232698683	&	1.90E-06	&	9.431803614	\\
			8	&	0.232696784	&	1.52E-10	&	13.61147493	\\
			16	&	0.232696784 &	4.72E-16	&	18.2945666413\\\hline	
			\end{tabular}}											
	\end{center}
\end{table}

\begin{table}[H]
	\caption{\small{Absolute errors and ROC for wavelet quasilinearization approach at $t=0.5$ for example \ref{p3_sub11}:}}\label{P3_ex2tab4}												 									
	\centering											
	\begin{center}											
\resizebox{8cm}{1cm}{											
		\begin{tabular}{|cccc|}				\hline											
				
				Grid Points	&	Solution	&	Absolute Error	&	ROC\\\hline
			2	&	0.231385398	&	0.001311386	&		\\
			4	&	0.232698683	&	1.90E-06	&	9.431803614	\\
			8	&	0.232696784	&	1.52E-10	&	13.61147334	\\
			16	&	0.232696784&	4.72E-16	&	18.2945666413\\\hline	
				\end{tabular}}											
	\end{center}
\end{table}

\begin{figure}[H]
\begin{center}
\includegraphics[scale=0.30]{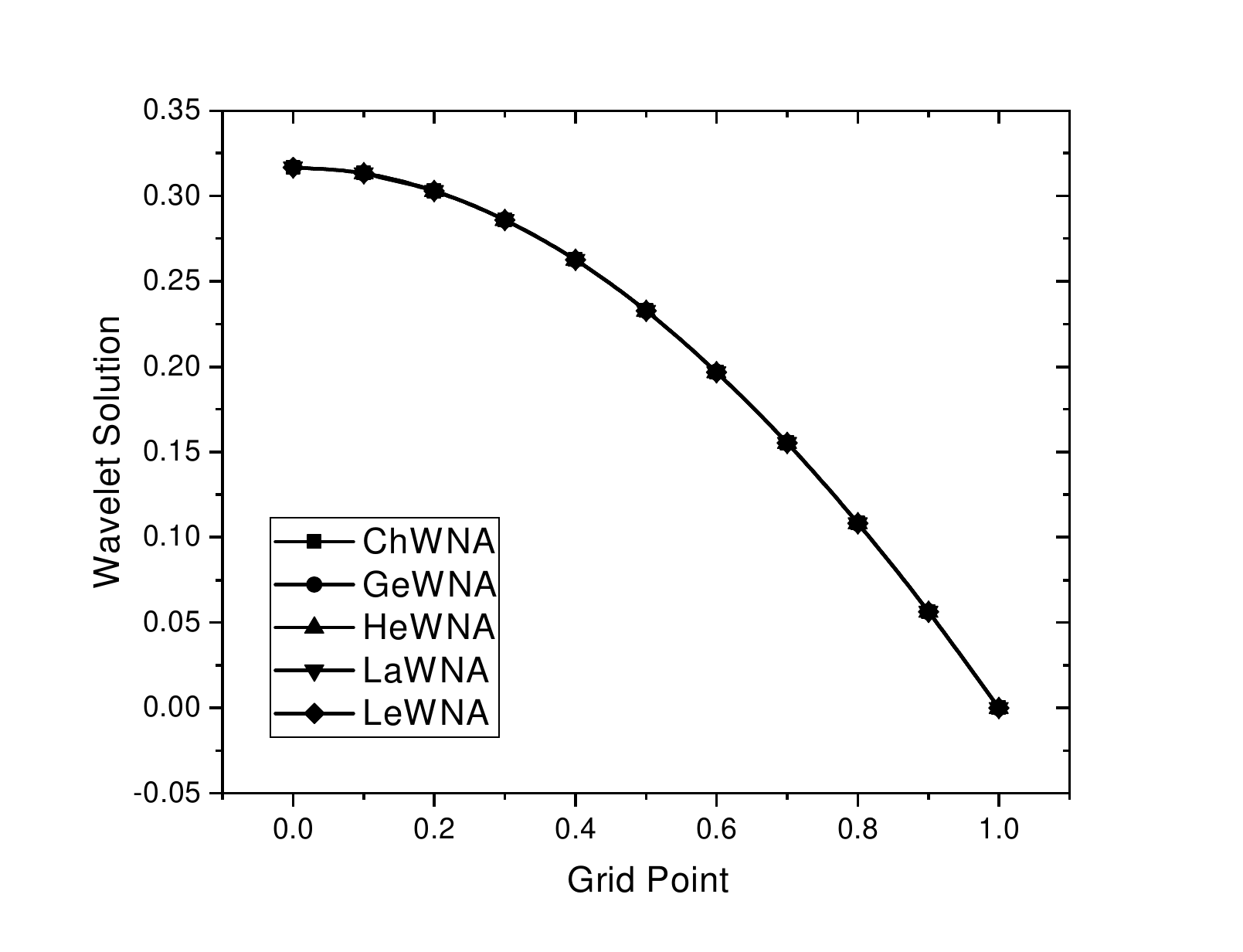}\includegraphics[scale=0.30]{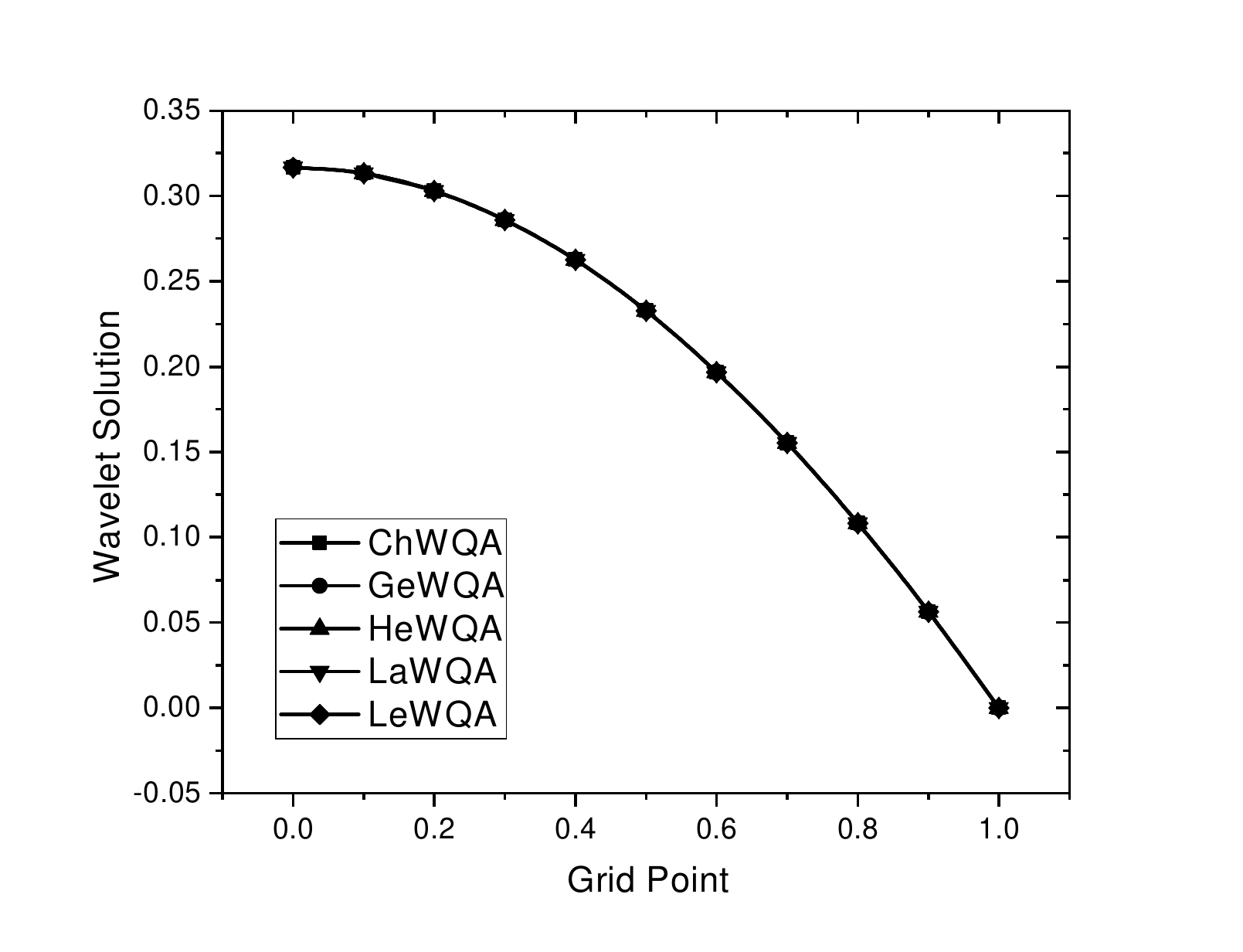}
\end{center}
\caption{Solution plots for $J=3$ for example \ref{p3_sub11} for Newton and Quasilinearization approach}\label{ex2fig}
\end{figure}

\subsection{Example 3}\label{p3_sub12}
Consider the nonlinear SBVP:
\begin{eqnarray}\label{P3_42}
y''(t)+\frac{3}{t}y'(t)+\left(\frac{1}{8y^{2}}-\frac{1}{2}\right)=0,\quad\quad y'(0)=0,\quad y(1)=1.
\end{eqnarray}
The above nonlinear SBVP is discussed in \cite{RW1989,JV1998} to study rotationally symmetric solutions of shallow membrane caps. Exact solution of this problem is not known. Computed solutions are tabulated in table \ref{P3_extab3}. Solution are also plotted in \ref{ex3fig}. We have given table \ref{P3VIM1} to illustrate the accuracy of the computed solution in absence of exact solutions. We also observed that for small changes in initial vector, (e.g., taking $[0.9,0.9,\hdots,0.9]$ or $[0.8,0.8,\hdots,0.8]$) does not significantly change the solution. 
\begin{table}[H]
\caption{\small{Numerical solution of example \ref{p3_sub12} computed in \cite{Mandeep-Amit-2016} by VIM+HPM:}}\label{P3VIM1}														 \centering	
\begin{center}	
\resizebox{12cm}{0.45cm}{											
\begin{tabular}	{|cccccccccccc|}											
\hline	
$t$ &0 &0.1&0.2&0.3&0.4&0.5&0.6&0.7&0.8&0.9&1\\\hline
\mbox{VIM+HPM}&0.954135&0.954589&0.95595&0.958822&0.961403&0.965503&0.970526&0.976479&0.983369&0.991206&1
\\\hline
\end{tabular}}											
\end{center}
\end{table}
\begin{table}[H]
\caption{\small{Comparison of the solutions computed by the proposed methods for example \ref{p3_sub12} at $J=2$:}}\label{P3_extab3}														 \centering	
\begin{center}	
\resizebox{16.8cm}{2.01cm}{											
\begin{tabular}	{|c | l|  l|  l| l| l| l| l| l| l| l|}											
\hline	
Grid Point	&	ChWNA	&	GeWNA	&	HeWNA	&	LaWNA	&	LeWNA	&	ChWQA	&	GeWQA	&	HeWQA	&	LaWQA	&	LeWQA	\\\hline
0	&	0.954135307	&	0.954135307	&	0.954135307	&	0.954135307	&	0.954135307	&	0.954135328	&	0.954135328	&	0.954135307	&	0.954135307	&	0.954135307	\\
0.1	&	0.954588729	&	0.954588729	&	0.954588729	&	0.954588729	&	0.954588729	&	0.954588759	&	0.954588759	&	0.954588759	&	0.954588729	&	0.954588729	\\
0.2	&	0.955949645	&	0.955949645	&	0.955949645	&	0.955949645	&	0.955949645	&	0.955949678	&	0.955949678	&	0.955949678	&	0.955949645	&	0.955949645	\\
0.3	&	0.958220005	&	0.958220005	&	0.958220005	&	0.958220005	&	0.958220005	&	0.958220025	&	0.958220025	&	0.958220025	&	0.958220005	&	0.958220005	\\
0.4	&	0.961403036	&	0.961403036	&	0.961403036	&	0.961403036	&	0.961403036	&	0.961403044	&	0.961403044	&	0.961403044	&	0.961403036	&	0.961403036	\\
0.5	&	0.965503219	&	0.965503219	&	0.965503219	&	0.965503219	&	0.965503219	&	0.965503224	&	0.965503224	&	0.965503224	&	0.965503219	&	0.965503219	\\
0.6	&	0.970526246	&	0.970526246	&	0.970526246	&	0.970526246	&	0.970526246	&	0.970526259	&	0.970526259	&	0.970526259	&	0.970526246	&	0.970526246	\\
0.7	&	0.97647897	&	0.97647897	&	0.97647897	&	0.97647897	&	0.97647897	&	0.97647899	&	0.97647899	&	0.97647899	&	0.97647897	&	0.97647897	\\
0.8	&	0.983369349	&	0.983369349	&	0.983369349	&	0.983369349	&	0.983369349	&	0.983369362	&	0.983369362	&	0.983369362	&	0.983369349	&	0.983369349	\\
0.9	&	0.991206375	&	0.991206375	&	0.991206375	&	0.991206375	&	0.991206375	&	0.991206367	&	0.991206367	&	0.991206367	&	0.991206375	&	0.991206375	\\
1	&	1	&	1	&	1	&	1	&	1	&	1	&	1	&	1	&	1	&	1	\\\hline
\end{tabular}}											
\end{center}
\end{table}

\begin{figure}[H]
\begin{center}
\includegraphics[scale=0.30]{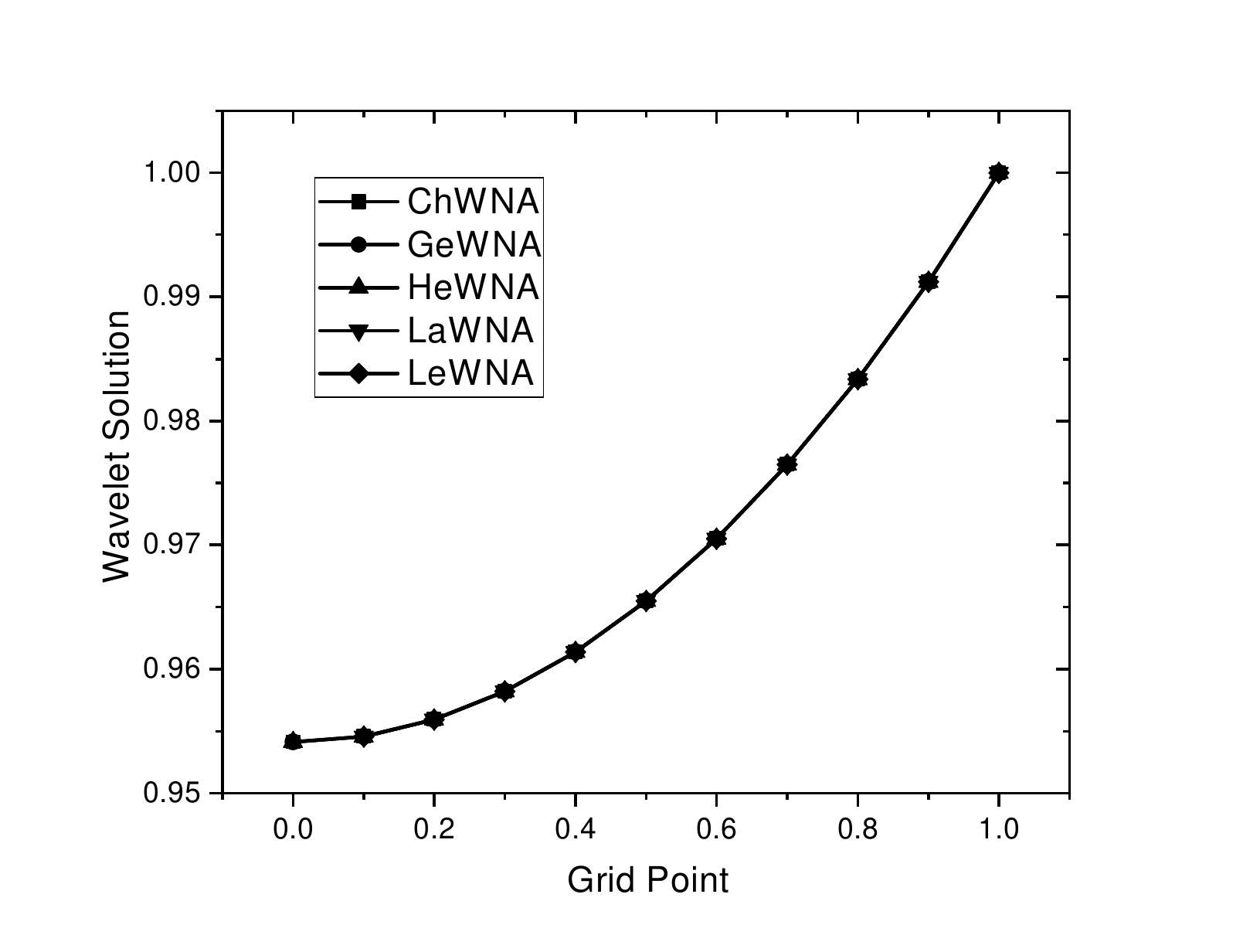}\includegraphics[scale=0.30]{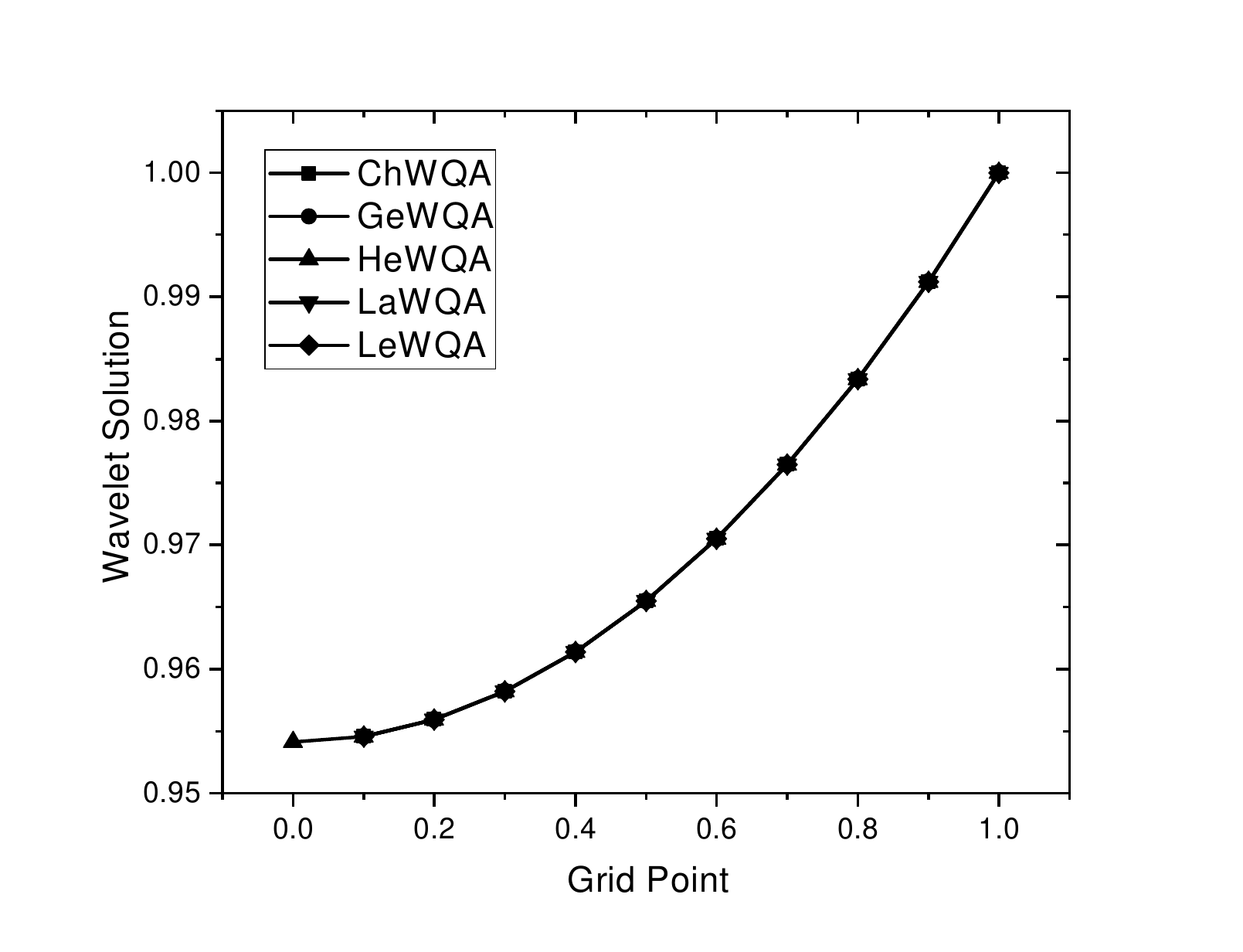}
\end{center}
\caption{Solution plots for $J=3$ for example \ref{p3_sub12} for Newton and Quasilinearization approach}\label{ex3fig}
\end{figure}

\subsection{Example 4}\label{p3_sub13}
Consider the nonlinear SBVP:
\begin{eqnarray}\label{P3_83}
y''(t)+\frac{2}{t}y'(t)+e^{-y(t)}=0,\quad\quad y'(0)=0,\quad 2y(1)+y'(1)=0.
\end{eqnarray}
The above nonlinear SBVP is discussed by Duggan and Goodman \cite{RA1986} as heat conduction model in human head. Exact solution of this problem is not known to the best of our knowledge. Computed solutions are tabulated in table \ref{P3_extab4}.  Solution are also plotted in \ref{ex4fig}. We have given table \ref{P3VIM2} to illustrate the accuracy of the computed solution in absence of exact solutions. We also observed that for small changes in initial vector, (e.g., taking $[0.1,0.1,\hdots,0.1]$ or $[0.2,0.2,\hdots,0.2]$) does not significantly change the solution. 
\begin{table}[H]
\caption{\small{Numerical solution of example \ref{p3_sub13} computed in \cite{Mandeep-Amit-2016} by VIM+HPM:}}\label{P3VIM2}														 \centering	
\begin{center}	
\resizebox{12cm}{0.45cm}{											
\begin{tabular}	{|cccccccccccc|}											
\hline	
$t$ &0 &0.1&0.2&0.3&0.4&0.5&0.6&0.7&0.8&0.9&1\\\hline
\mbox{VIM+HPM}& 0.269896 & 0.268624 & 0.264804 & 0.258416 & 0.249432 & 0.237809 & 0.223491 & 0.206408 & 0.186477 & 0.163596 & 0.137646
\\\hline
\end{tabular}}											
\end{center}
\end{table}

\begin{table}[H]
\caption{\small{ Comparison of the solutions computed by the proposed methods for example \ref{p3_sub13} at $J=2$:}}\label{P3_extab4}														 \centering											
\begin{center}	
\resizebox{16.8cm}{2.01cm}{											
\begin{tabular}	{|c | l|  l|  l| l| l| l| l| l| l| l|}										
\hline		
Grid Point	&	ChWNA	&	GeWNA	&	HeWNA	&	LaWNA	&		LeWNA	&	ChWQA	&	GeWQA	&	HeWQA	&	LaWQA	&	LeWQA	\\\hline
0	&	0.269943779	&	0.269943779	&	0.269943779	&	0.269948773	&		0.269943779	&	0.272366649	&	0.272359192	&	0.272340537	&	0.272340537	&	0.272360434	\\
0.1	&	0.268676386	&	0.268676386	&	0.268676386	&	0.268676385	&		0.268676386	&	0.271096835	&	0.271089425	&	0.271070957	&	0.271070957	&	0.27109066	\\
0.2	&	0.264853383	&	0.264853383	&	0.264853383	&	0.264853383	&		0.264853383	&	0.267280011	&	0.267272742	&	0.267254819	&	0.267254819	&	0.267273954	\\
0.3	&	0.258462168	&	0.258462168	&	0.258462168	&	0.258462168	&		0.258462168	&	0.260894018	&	0.260886981	&	0.260869927	&	0.260869927	&	0.260888154	\\
0.4	&	0.24947312	&	0.24947312	&	0.24947312	&	0.24947312	&		0.24947312	&	0.251901887	&	0.251895169	&	0.251879256	&	0.251879256	&	0.251896289	\\
0.5	&	0.237844161	&	0.237844161	&	0.237844161	&	0.237844161	&		0.237844161	&	0.240251793	&	0.240245476	&	0.24023091	&	0.24023091	&	0.240246529	\\
0.6	&	0.223520119	&	0.223520119	&	0.223520119	&	0.223520119	&		0.223520119	&	0.225877002	&	0.225871161	&	0.225858081	&	0.225858081	&	0.225872134	\\
0.7	&	0.206431878	&	0.206431878	&	0.206431878	&	0.206431878	&		0.206431878	&	0.208695838	&	0.208690539	&	0.208679016	&	0.208679016	&	0.208691422	\\
0.8	&	0.186495288	&	0.186495288	&	0.186495288	&	0.186495288	&		0.186495288	&	0.188611697	&	0.188606996	&	0.188597035	&	0.188597035	&	0.188607779	\\
0.9	&	0.163609789	&	0.163609789	&	0.163609789	&	0.163609789	&		0.163609789	&	0.16551314	&	0.16550908	&	0.165500639	&	0.165500639	&	0.165509757	\\
1	&	0.137656718	&	0.137656718	&	0.137656718	&	0.137656718	&		0.137656718	&	0.139274129	&	0.139270737	&	0.139263739	&	0.139263739	&	0.139271303	\\\hline
\end{tabular}}											
\end{center}											
\end{table}	

\begin{figure}[H]
\begin{center}
\includegraphics[scale=0.30]{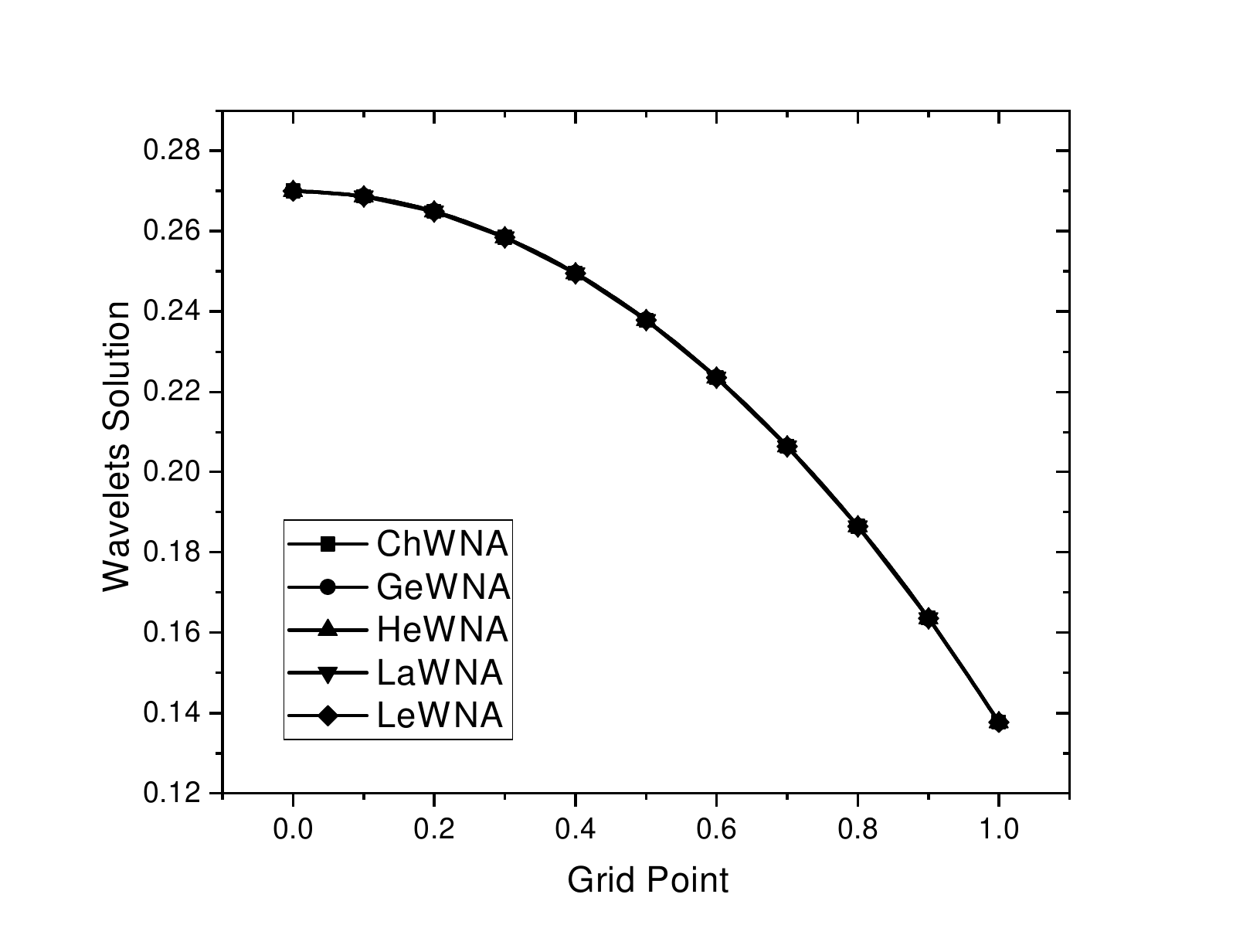}\includegraphics[scale=0.30]{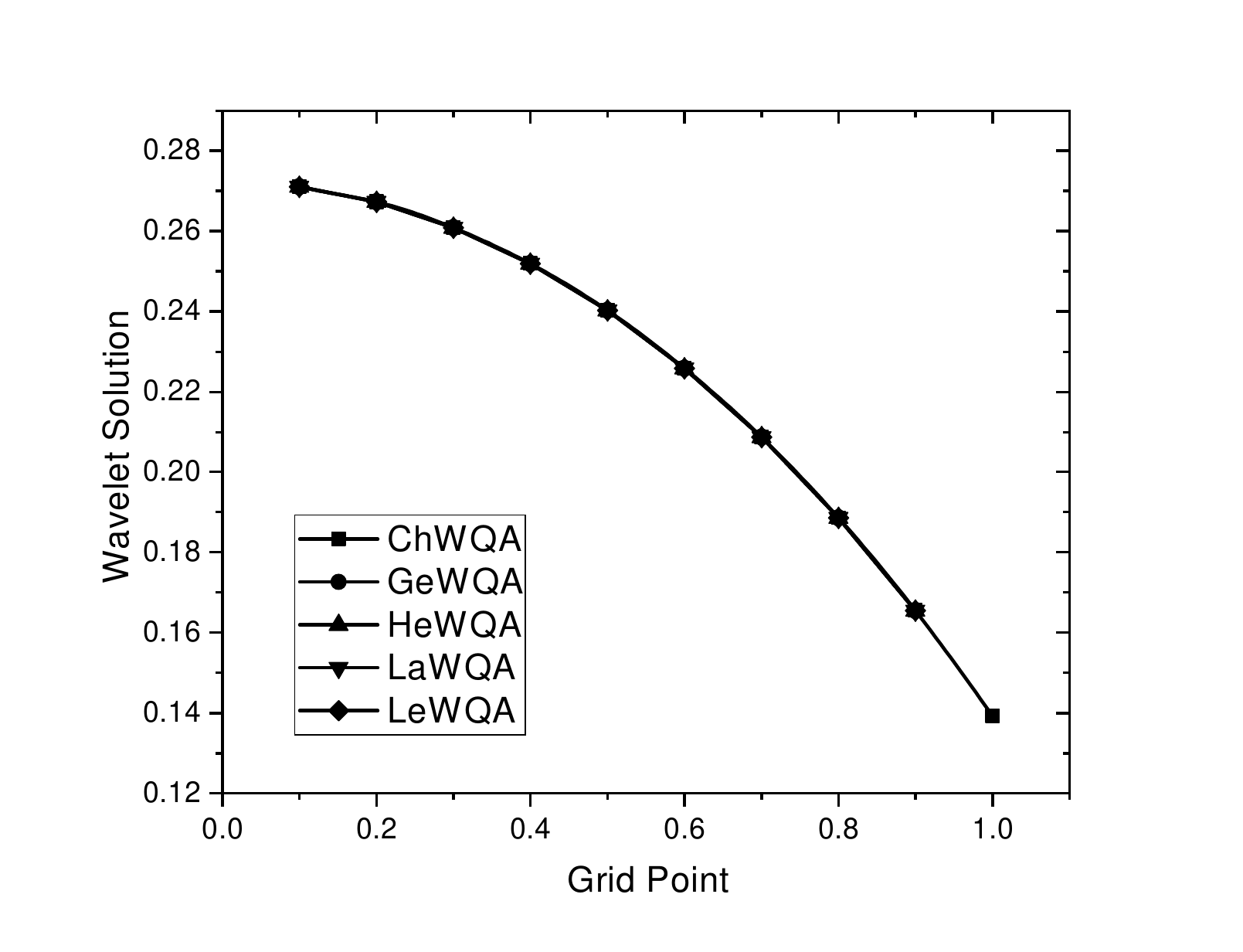}
\end{center}
\caption{Solution plots for $J=3$ for example \ref{p3_sub13} for Newton and Quasilinearization approach}\label{ex4fig}
\end{figure}

\section{Conclusions}\label{P3_conclusions}
 In this research article, we have proposed general methods using orthonormal polynomial wavelets. We construct ten different wavelet methods referred to as ChWNA, GeWNA, LeWNA,  LaWNA, HeWNA, ChWQA, GeWQA, LeWQA, LaWQA and HeWQA. We apply these methods to solve nonlinear SBVPs arising in different branches of science and engineering \cite{RW1989,JV1998, RA1986,CHAMBRE1952,CS1967}. The problem of singularity is handled with the help of this general approach. In methods based on the wavelet Newton approach, we have used the Newton-Raphson method to solve the nonlinear system. In methods based on wavelet quasilinearization, the difficulty arose due to the nonlinearity of differential equations is overcome with the help of quasilinearization approach. The main advantage of proposed methods is that solutions with high accuracy are obtained using few iterations and few spatial divisions. Computational work illustrates the validity and accuracy of the procedure. Figures \ref{ex1fig}, \ref{ex2fig}, \ref{ex3fig} and \ref{ex4fig} we show the nature of the solutions. These figures pictorially verify that the solutions obtained are similar in nature and accuracy, irrespective of the polynomials. We observe the change in initial guesses does not result in large deviations in solutions, i.e., small variations in initial guesses results small changes in solutions so our methods are robust and stable and must be preferred over other existing methods. The theoretical convergence verifies our method very well. The singularities are also dealt in a much easier way.The wavelet methods based on orthonormal polynomial are highly reliable and easy to implement and consume less time. It will be interesting to see how the proposed techniques are explored and extended to other class of problems (\cite{Sunil2014, Rashidi2014, Ranbir2020, Mugisha2020, Bessem2020, Manafian2020, Hossein2020, Fatima2020, Aziz2020}).

\end{document}